\newtheorem{theorem}{Theorem}
\newtheorem{property}{Property}
\newtheorem{remark}{Remark}
\newtheorem{assumption}{Assumption}
\newtheorem{lemma}{Lemma}
\newcommand{\eod}{\ensuremath{\hfill\Box}}
\newcommand\fs@betterruled{%
	\def\@fs@cfont{\bfseries}\let\@fs@capt\floatc@ruled
	\def\@fs@pre{\vspace*{5pt}\hrule height.8pt depth0pt \kern2pt}%
	\def\@fs@post{\kern2pt\hrule\relax}%
	\def\@fs@mid{\kern2pt\hrule\kern2pt}%
	\let\@fs@iftopcapt\iftrue}
\newcommand{\mc}{\mathcal}
\newcommand{\bb}{\mathbb}
\newcommand{\col}{\operatorname{col}}
\title{\LARGE \bf
Accelerated Multi-Agent Optimization Method over Stochastic Networks
}
\author{Wicak Ananduta, Carlos Ocampo-Martinez, and Angelia Nedi\'c
\thanks{
W. Ananduta is with the Delft Center of Systems and Control (DCSC), TU Delft, the Netherlands. 
 C. Ocampo-Martinez is with Institut de Rob\`{o}tica i Inform\`{a}tica Industrial (CSIC-UPC), Barcelona, Spain. A. Nedi\'c is with  School of Electrical, Computer and Energy Engineering,
Arizona State University. 
E-mail addresses: \texttt{w.ananduta@tudelft.nl, carlos.ocampo@upc.edu, angelia.nedich@asu.edu}.  
%.
%
%W. Ananduta and C. Ocampo-Martinez are with Institut de Rob\`{o}tica i Inform\`{a}tica Industrial
%	(CSIC-UPC), Barcelona, Spain
%	(emails: {\tt\small \{wananduta, cocampo\}@iri.upc.edu}).}%   
%\thanks{This work has received funding from the European Union's Horizon 2020 research and innovation programme under the Marie Sk\l{}odowska-Curie grant agreement No 675318 (INCITE). 
}
}
\begin{document}

\maketitle
\thispagestyle{empty}
\pagestyle{empty}

\begin{abstract}
We propose a distributed method to solve a multi-agent optimization problem with strongly convex cost function and equality coupling constraints. The method is based on Nesterov's accelerated gradient approach and works over stochastically time-varying communication networks. We consider the standard assumptions of Nesterov's method and show that the sequence of the expected dual values converge toward the optimal value with the rate of $\mc O(1/k^2)$. Furthermore, we provide a simulation study of solving an optimal power flow problem with a well-known benchmark case.
\end{abstract}
\begin{keywords}
multi-agent optimization, distributed method, accelerated gradient method, distributed optimal power flow problem
\end{keywords}
%%%%%%%%%%%%%%%%%%%%%%%%%%%%%%%%%%%%%%%%%%%%%%%%%%%%%%%%%%%%%%%%%%%%%%%%%%%%%%%%
\section{Introduction}
The advancement on information, computation and communication technologies promotes the deployment of  distributed approaches to solve complex large-scale problems, e.g., in power networks \cite{molzahn2017,yi2016} and water networks \cite{grosso2017}. On one hand, such approaches offer flexibility and scalability. On the other hand, they require more complex design than the centralized counterpart as multiple computational units must cooperate and communicate among each other.

%\paragraph{opt. prob. structure}
In this paper, we deal with a multi-agent optimization problem, in which the cost function is a summation of a strongly convex cost functions. Moreover, the problem has equality coupling constraints. This formulation is mainly motivated from optimal power flow (OPF) problems of large-scale power networks \cite{molzahn2017} and resource allocation problems \cite{xiao2006,yi2016}. Furthermore, the problem can also be considered as a subclass of extended monotropic problems \cite{bertsekas2008}.  

%\paragraph{motivating problems}

%\paragraph{existing approaches}
We solve the problem in a distributed manner through its dual to deal with the coupling constraints. Particularly, we develop the method based on Nesterov's accelerated gradient method \cite{nesterov1983,beck2009}, which is an accelerated first-order approach, with the rate of $\mc O(1/k^2)$. This accelerated method has been used to develop a fast distributed gradient method to solve network utility maximization problems \cite{beck2014}, a fast alternating direction method of multipliers (ADMM) for a certain class of problems with strongly convex cost function \cite{goldstein2014}, and distributed model predictive controllers \cite{zhou2015}, among others.  

%\paragraph{Nesterov's accelerated method}
%\paragraph{time-varying communication network}
%We propose a distributed method to solve the problem. 
However, different from the aforementioned papers, one feature of the system that we particularly pay attention to is the time-varying nature of the communication network, over which the agents exchange information. Specifically here, we assume that the network is stochastically time-varying and this assumption can model communication failures that might occur in large-scale systems. Similar setup on communication networks can be found in \cite{wei2013,chang2016,hong2017,ananduta2019}, which develop unaccelerated first-order methods, and \cite{jakovetic2014,fercoq2015}, which propose a Nesterov-like fast gradient method for distributed optimization problem with a common decision variable. Nevertheless, whereas the former four papers do not consider an accelerated method, the latter ones deal with a different problem and work directly in the primal space. Note that different models of time-varying communication networks have also been considered, as in \cite{nedic2015,uribe2018,scutari2019}.

%\paragraph{contributions}
To summarize, the main contribution of this paper is an accelerated first-order distributed method for a multi-agent optimization problem, which works over stochastic communication networks. As a fully distributed algorithm, the parameter design and iterations only need local information, i.e., neighbor-to-neighbor communication.  Furthermore, since the method is based on Nesterov's accelerated approach, it enjoys the convergence rate of $\mc O(1/k^2)$ on the expected dual value, as shown in the convergence analysis. %This rate is faster than the standard dual ascent method, which has the rate of $\mc O(1/k)$. 

%\paragraph{paper structure}
The paper is structured as follows. Section \ref{sec:setup} provides the problem setup and the cosidered model of time-varying communication networks. Afterward, Section \ref{sec:prop_met} presents the proposed distributed method along with its convergence statement. Then, in Section \ref{sec:conv}, we show the convergence analysis of the proposed method. Furthermore, we also showcase the performance of the proposed method to solve an intra-day OPF problem for a well-known benchmark case in Section \ref{sec:num_st}. Finally, Section \ref{sec:concl} concludes the paper by providing some remarks and discussions about future work. 
%====================================================

%\subsection*{Preliminaries}
\subsection*{Notation and properties}
The set of real numbers is denoted by $\bb R$. For any $a\in \bb R$, $\bb R_{\geq a}$ denotes $\{b\in \bb R: b \geq a \}$. The inner product of vectors $x,y \in \bb R^n$ is denoted by $\langle x,y\rangle$, whereas the Euclidean vector norm and the induced matrix norm are denoted by $\|\cdot\|$. The operator $\col \{\cdot \}$ stacks the arguments column-wise. We use $0_n$ to denote zero vector with dimension $n$. When the dimension is clear from the context, we may omit the subscript. %The operator $\arg\{\cdot \}$ is used on $\min$ (or $\max$) operator and it gives the arguments that minimize (or maximize) the function, e.g., for any $f(x):\bb R^n \to \bb R$, $\arg\{\min_x f(x) \} = \{x^{\star}\in \bb R^n: f(x^{\star}) \leq f(x)\}$. 
Furthermore, the following properties will be used in the convergence analysis.

\begin{property}[Strong convexity]
	\label{def:strong_cvx}
	A differentiable function $f(x):\bb{R}^n\to\bb{R}$ is strongly convex, if for any $x,y\in\bb{R}^n$ it holds that
	$$ \langle\nabla f(y)-\nabla f(x),y-x\rangle \geq \sigma \|y-x\|^2, $$
	where $\sigma$ is the strong convexity constant. \eod
\end{property}
\begin{property}[Lipschitz smoothness]
	\label{def:smooth}
	A function $f(x):\bb{R}^n\to\bb{R}$ is continuously differentiable with Lipschitz continuous gradient, if for any $x,y\in \bb R^n$ it holds that
	$$\|\nabla f(y)-\nabla f(x)\| \leq L \|y-x\|, $$
	where  $L$ denotes the Lipschitz constant. \eod
\end{property}
%%%%%%%%%%%%%%%%%%%%%%%%%%%%%%%%%%%%%%%%%%%%%%%%%%%%%%%%%%%
\section{Problem setup}
\label{sec:setup}
%We start by providing the description of the system, which includes the formulation of the optimization problem and the communication network model that we cosider. 

\subsection{Multi-agent optimization problem}
%\paragraph*{math formulation}
We consider a multi-agent system, where the set of agents is denoted by $\mc N := \{1,2,\dots,N \}$. The agents want to cooperatively solve an optimization problem in the following form:
\begin{subequations}
	\begin{align}
	\underset{u_i \in \mc U_i,  \forall i\in \mc N}{\operatorname{minimize}} \ \ & \sum_{i=1}^{N} f_i(u_i)  \label{eq:cost_f}\\
	\text{s.t.} \ \ &  G_i^i u_i + \sum_{j\in \mc N_i} G_i^j u_j = g_i, \quad \forall i \in \mc N, \label{eq:coup_con}
	\end{align}
	\label{eq:prob}%
\end{subequations}
where $u_i \in \bb R^{n_i}$ and $\mc U_i \in \bb R^{n_i}$ denote the decision vector and the local set constraint of agent $i$, respectively. In \eqref{eq:cost_f}, each cost function $f_i(u_i)$ is associated to agent $i$. Moreover, each equality in \eqref{eq:coup_con}, with the non-zero matrix $G_i^j \in \bb R^{m_i\times n_j}$, for each $j\in \mc N_i \cup \{i\}$ and $i\in \mc N$, and $g_i \in \bb R^{m_i}$, is assigned to agent $i$ and couples agent $i$ with some other agents, i.e., $j \in \mc N_i \subseteq \mc N$. 
  %we assume if $j\in \mc N_i$, then $i \in \mc N_j$, for any $i,j\in \mc N$, and 
Based on the formulation of the coupling constraints in \eqref{eq:coup_con}, we can represent the system as a directed graph, denoted by $\mc S = (\mc N, \mc V)$, where $\mc V$ denotes the set of links that represents how each agent influences the coupling constraint \eqref{eq:coup_con} of other agents. Specifically, the link  $(j,i) \in \mc V$ implies that $u_j$ appears on the coupling constraint of agent $i$, i.e., $j\in \mc N_i$. Therefore, we can say that $\mc N_i$ is the set of in-neighbors of agent $i$. On the other hand, we also introduce the set of out-neighbors, denoted by $\mc M_i$, i.e., $\mc M_i = \{j\in \mc N: (i,j) \in \mc V \}$. Furthermore, we define $i \in \mc M_i$ and, in general, $\mc M_i$ may not be equal to $\mc N_i \cup \{i\}$  (see Figure \ref{fig:ex}).

%Additionally, we denote the set of agents that consider agent $i$ as their neighbor by $\mc M_i$, i.e., the coupling constraint of each agent $j\in \mc M_i$ involves $u_i$.
%Note that, $i\in \mc M_i$ and, in general, if $j\in \mc N_i$, $u_i$ does not necessarily appear in the coupling constraint of agent $j$. %\textcolor{red}{in and out neighbors?}%We represent the multi-agent system based on the coupling constraint with an undirected graph, denoted by $\mc G := (\mc N,\mc E)$, where $\mc E$ denotes the set of vertices that 

%\paragraph*{examples} 
Problem \eqref{eq:prob} is a subclass of the extended monotropic problem \cite{bertsekas2008}. Resource allocation problems \cite{xiao2006,yi2016} can also be formulated as in \eqref{eq:prob}. A particular practical problem of interest, which can be represented by \eqref{eq:prob}, is the direct current (DC) OPF problem \cite{molzahn2017},  where the decision vectors $u_i$ might consist of the real powers and phase angle, whereas \eqref{eq:coup_con} represents the DC approximation of the power flow equations. Note that, in the DC-OPF problem, $\mc M_i = \mc N_i \cup \{i\}$.
 
%\paragraph*{assumptions}
Now, we consider the following assumptions hold.
\begin{assumption}
	\label{as:cost_f}
	The function $f_i : \bb R^{n_i} \to \bb R$, for each $i\in \mc N$, is differentiable and strongly convex with strong convexity parameter denoted by $\sigma_i$. \eod
\end{assumption}
\begin{assumption}
	\label{as:loc_set}
	The local set $\mc U_i$, for each $i\in \mc N$, is compact and convex. \eod
\end{assumption}
\begin{assumption}
	\label{as:feas_set}
	The feasible set of Problem \eqref{eq:prob} is non-empty. \eod
\end{assumption}

\begin{figure}
	\centering
	\vspace{5pt}
	\includegraphics[scale=1]{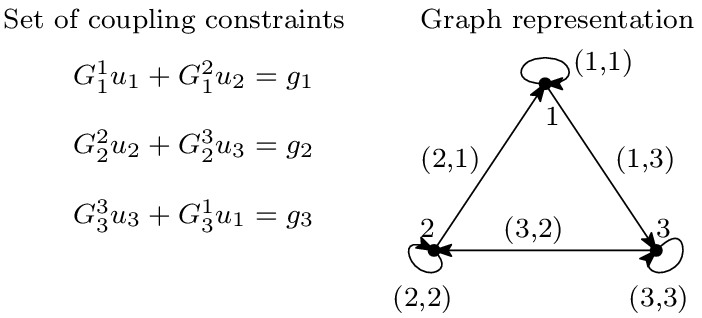}
	\caption{A small network of three agents. Notice that $\mc N_1 = \{2\}$ and $\mc M_1 = \{1,3\}$.
	}
	\label{fig:ex}
\end{figure}

%\paragraph*{justification of assumptions}
Assumptions \ref{as:cost_f} and \ref{as:loc_set} are rather restrictive, however, commonly used in the applications considered, i.e., OPF and resource allocation problems. Moreover, these assumptions allow us to apply Nesterov's accelerated gradient method to solve the dual problem of \eqref{eq:prob}, as these assumptions result in a dual function with Lipschitz continuous gradient. This statement is elaborated further in Section \ref{sec:conv}. Furthermore, Assumption \ref{as:feas_set} is considered to ensure that the proposed algorithm can find a solution to Problem \eqref{eq:prob}.

\subsection{Stochastic communication networks}
\label{sec:tv_net}
The aim of this work is to design a distributed optimization algorithm that solves Problem \eqref{eq:prob}. As a distributed method, the algorithm requires each agent to communicate with other agents over a communication network, which we suppose to be time-varying. 
Precisely, the communication network is represented by the undirected graph $\mc G(k) = (\mc N, \mc L(k))$, where $\mc L(k)\subseteq \mc N \times \mc N$ denotes the set of communication links that may vary over iteration $k$, i.e., $\{i,j\} \in \mc L(k)$ implies that agents $i$ and $j$ can communicate at iteration $k$. Thus, we denote by $\mc E_i(k)$ the set of agents that can exchange information with agent $i$, i.e., $\mc E_i(k) = \{j\in\mc N: \{i,j\} \in \mc L(k) \}$. Furthermore, we consider the activation of communication links as a random process and the following assumption holds.
\begin{assumption}
	\label{as:com_link}
	The set $\mc L(k)$ is a random variable that is independent and identically distributed across iterations. Furthermore, any communication link of neighboring agents is active with a positive probability denoted by $\beta_{\{i,j\}}$, i.e., $\bb P(\{i,j\} \in \mc L(k)) = \beta_{\{i,j\}} >0$, for $\{i,j\} \in \{\{i',j'\} \in \mc N \times \mc N: j' \in \mc N_{i'}, i' \in \mc N \}$. Additionally, $\beta_{\{i,i\}}=1$, for all $i\in\mc N$. \eod
\end{assumption}
%\textcolor{red}{Directed communication networks? NO, CHANGE BACK, ADJUST ALG. 1}

Assumption \ref{as:com_link} implies that the probability that agent $i$ can receive information from all its in-neighbors $j\in \mc N_i$ at the same iteration $k$ is positive. Let $\alpha_i$ denote this probability, thus we have that $\alpha_i = \prod_{j\in \mc N_i} \beta_{\{i,j\}}$.  
%%%%%%%%%%%%%%%%%%%%%%
\section{Proposed method}
\label{sec:prop_met}
%\paragraph{main idea}
In this section, we propose a distributed method to solve Problem \eqref{eq:prob} over stochastic communication networks. The proposed method actually solves the dual problem associated to \eqref{eq:prob} and is based on Nesterov's accerelated gradient approach \cite{nesterov1983,beck2009}.

%\paragraph*{dual function and dual problem}
To that end, let $\lambda_i \in \bb R^{m_i}$ denote the Lagrange multiplier associated to \eqref{eq:coup_con}, for each $i\in\mc N$, and $\lambda = \col\{\lambda_i, i \in \mc N \}$. Thus, we define the dual function, associated to \eqref{eq:prob} and denoted by $q(\lambda)$, as follows:
\begin{equation}
	q(\lambda) = \sum_{i \in \mc N} q_i(\lambda^i) , \label{eq:d_func}
\end{equation}
where
\begin{equation}
	q_i(\lambda^i) = \min_{u_i \in \mc U_i} \left\{ f_i(u_i) - \langle\lambda_i, g_i\rangle + \sum_{j\in \mc M_i} \langle G_j^{i\top}\lambda_j, u_i \rangle \right\}. \label{eq:loc_d_func}
\end{equation}
Note that $\lambda^i$ denotes all Lagrange multipliers associated to the coupling constraints that involve agent $i$, i.e., $\lambda^i = \col\{\lambda_j, j\in \mc M_i \}$. 
We will then solve the dual problem:
\begin{equation}
	\operatorname{maximize} q(\lambda), \label{eq:d_prob}
\end{equation}
by adapting Nesterov's accelerated gradient method such that it works over stochastically time-varying communication networks (c.f. Section \ref{sec:tv_net}). Note that, due to Assumptions \ref{as:cost_f}-\ref{as:feas_set}, the strong duality holds \cite[Proposition 5.2.1]{bertsekas1995}.

\begin{algorithm}[!t]
	\caption{Distributed accelerated method}
	\label{alg:std}
	\textbf{Initialization} (for each $i\in\mc N$)\\
	%\begin{itemize}
	%\item 
	Set $\theta(1)=1$
	%\item Choose 
	and $\hat{\lambda}_i(1) = \lambda_i(0)  =0$\\
	%\item Compute $u_i(0) = \arg\{q_i(\xi(0))\}$
	%\item Send $u_i(0)$ to neighbors $j\in \mc N_i$
	%\item Compute $\lambda_i(0) = \xi_i(0) + \eta_i \sum_{j\in \mc N_i \{i\}} G_i^j u_j(0)$ 
	%\end{itemize}
	\textbf{Iteration} (for each $i\in\mc N$, $k\geq 1$)
	\begin{enumerate}
		\item Compute $u_i(k)$:
		\begin{equation}
			u_i(k) = \arg\min_{u_i \in \mc U_i} f_i(u_i) + \sum_{j\in \mc M_i} \langle G_j^{i\top}\hat{\lambda}_j(k), u_i \rangle
			\label{eq:u_std}
		\end{equation}
		\item Send $G_j^i u_i(k)$ to out-neighbors $j \in \mc M_i$ and receive $G_i^j u_j(k)$ from the in-neighbors $j\in \mc N_i$
		\item Compute $\lambda_i(k) $:
		\begin{equation}
		\label{eq:lambda_std}
		\lambda_i(k) = 
		%\begin{cases}
		\hat{\lambda}_i(k) + \eta_i \left(G_i^i u_i(k)+\sum_{j\in \mc N_i} G_i^j u_j(k)-g_i\right)
		\end{equation}
		%	$$% \\
		%	\quad  \text{if } \mc N_i \subseteq \mc E_i(k)\\
		%	\lambda_i(k-1), \quad \text{otherwise}
		%	\end{cases}
		%	$$
		\item Compute $\theta(k+1) = \frac{1+\sqrt{1+4\theta^2(k)}}{2}$
		\item Compute $\hat{\lambda}_i(k+1)$:
		\begin{equation}
		\label{eq:lambda_h_std}
		\hat{\lambda}_i(k+1) = \lambda_i(k) + \frac{\theta(k)-1}{\theta(k+1)}\left(\lambda_i(k)-\lambda_i(k-1) \right)
		\end{equation}
		\item Send $\hat{\lambda}_i(k+1)$ to in-neighbors $j \in  \mc N_i$ and receive $\hat{\lambda}_j(k+1)$ from the out-neighbors $j\in  \mc M_i$
		%\item Update
		%\begin{equation}
		%	\xi_j^i(k+1) = 
		%\begin{cases}
		%\hat{\lambda}_j(k+1), \ \text{for } j \in \mc M_i% \cap \mc E_i(k),\\
		%\hat{\lambda}_j(k), \quad \text{otherwise}
		%\end{cases}
		%	\end{equation}
	\end{enumerate}

\end{algorithm}

Hence, first we state  the distributed method based on Nesterov's accelerated gradient approach without considering stochastic communication networks, i.e., the information required to perform the updates is always available. The method is shown in Algorithm \ref{alg:std}. For a detailed design procedure of Nesterov's accelerated method, the reader might check \cite{beck2009,beck2014}. The main steps in the iteration of Nesterov's accelerated approach can be seen in Steps 4 and 5 where an interpolated point of each Lagrange multiplier $\lambda_i$ (denoted by $\hat{\lambda}_i$) is computed. As a distributed method, these steps are carried out by each agent. Furthermore, the step-size of the gradient ascent in \eqref{eq:lambda_std}, denoted by $\eta_i$, is a local variable that must be chosen appropriately (c.f. Theorem \ref{th:conv}). Finally, note that, in \eqref{eq:u_std}, $u_i$ is updated by solving a local minimization derived from \eqref{eq:loc_d_func} and based on the interpolated points of the Lagrange multipliers from the out-neighbors, i.e., $\hat{\lambda}^i = \col\{\hat{\lambda}_j, j\in\mc M_i \}$. Due to Assumptions \ref{as:cost_f} and \ref{as:loc_set}, the local minimization in Step 1 admits a unique solution.

%\paragraph{statement of algorithm}
\begin{algorithm}[!t]
	\caption{Distributed accelerated method over stochastic networks}
	\label{alg:prop}
	\textbf{Initialization} (for each $i\in\mc N$)\\
	%\begin{itemize}
		%\item 
		Set $\theta(1)=1$,
		$\lambda_i(0)=0$, and
		%\item Choose 
		$\hat{\xi}_j^i(1) ={\xi}_j^i(0) =0,$ for all $j\in \mc M_i$\\
		%\item Compute $u_i(0) = \arg\{q_i(\xi(0))\}$
		%\item Send $u_i(0)$ to neighbors $j\in \mc N_i$
		%\item Compute $\lambda_i(0) = \xi_i(0) + \eta_i \sum_{j\in \mc N_i \{i\}} G_i^j u_j(0)$ 
	%\end{itemize}
	\textbf{Iteration} (for each $i\in\mc N$, $k\geq 1$): with random realization of $\mc L(k)$
	\begin{enumerate}
		\item Compute $u_i(k)$:\vspace{-3pt}
		\begin{equation}
		u_i(k) = \arg\min_{u_i \in \mc U_i} f_i(u_i) + \sum_{j\in \mc M_i} \langle G_j^{i\top}\hat{\xi}_j^i(k), u_i \rangle
		\label{eq:u_tv}
		\end{equation}
		\item Send $G_j^i u_i(k)$ to out-neighbors $j \in \mc E_i(k) \cap \mc M_i$ and receive $G_i^j u_j(k)$ from in-neighbors $j\in \mc E_i(k) \cap \mc N_i$
		\item Compute $\lambda_i(k)$:\vspace{-3pt}
		\begin{equation}
			\label{eq:lambda_tv}
		\hspace{-15pt}	\lambda_i(k) = 
			\begin{cases}
			\hat{\xi}^i_i(k) + \eta_i \left(G_i^i u_i(k)+\sum_{j\in \mc N_i} G_i^j u_j(k){-g_i}\right), \\
			\qquad \quad \  \text{if } \mc N_i \subseteq \mc E_i(k)\\
			\hat{\xi}_i^i(k), \quad \text{otherwise}
			\end{cases}
		\end{equation}
		\item Send ${\lambda}_i(k)$ to in-neighbors $j \in \mc E_i(k) \cap \mc N_i$ and receive ${\lambda}_j(k)$ from out-neighbors $j\in \mc E_i(k) \cap \mc M_i$
		\item Update $\xi_j^i(k) $, for all $j\in\mc M_i$:\vspace{-3pt}
		\begin{equation}
		\xi_j^i(k) = 
		\begin{cases}
		{\lambda}_j(k), \ \text{for } j \in \mc M_i \cap \mc E_i(k),\\
		\hat{\xi}_j^i(k), \quad \text{otherwise}
		\end{cases}
		\label{eq:xi}
		\end{equation}
		\item Compute $\theta(k+1) = \frac{1+\sqrt{1+4\theta^2(k)}}{2}$
		\item Compute $\hat{\xi}_j^i(k+1)$, for all $j\in\mc M_i$:\vspace{-3pt}
		\begin{equation}
		\hspace{-10pt}
			\label{eq:lambda_h_tv}
			\hat{\xi}_j^i(k+1) = \xi_j^i(k) + \frac{\theta(k)-1}{\theta(k+1)}\left(\xi_j^i(k)-\xi_j^i(k-1) \right)
		\end{equation}
	\end{enumerate}
	
\end{algorithm}
%\paragraph{highlight of the algorithm}
Now, we are ready to state the proposed method, which works over stochastic communication networks. The method is shown in Algorithm \ref{alg:prop}.  We adjust the gradient step update (Step 3) in order to take into account the time-varying nature of the communication network. As can be seen in Step 3,  $\lambda_i$ is only updated with the gradient step when agent $i$ receives new information from all in-neighbors in $\mc N_i$. Furthermore, the required Lagrange multipliers from the other agents $j\in \mc M_i$ are tracked by agent $i$ using the auxiliary vector $\xi^i=\col\{\xi_j^i, j\in \mc M_i \}$, where each $\xi_j^i$ is updated in \eqref{eq:xi}. Additionally, each agent $i$ must compute the interpolated point of $\xi_j^i$, denoted by $\hat{\xi}_j^i$ in \eqref{eq:lambda_h_tv}. This step is different than the steps in Algorithm \ref{alg:std}, where the exchanged information is actually the interpolated point $\hat{\lambda}_i$. %Note that Algorithm \ref{alg:std} can be considered as a special case of Algorithm \ref{alg:prop}, where $\beta_{\{i,j\}}=1$, for any $j \in \mc N_i$ and $i\in \mc N$.

%\paragraph{main result: convergence}
The outcome of Algorithm \ref{alg:prop}, which is the main result of this work, is stated as the following theorem.
\begin{theorem}
\label{th:conv}
Let Assumptions \ref{as:cost_f}-\ref{as:com_link} hold and the sequence $\lambda(k)$ be generated by Algorithm \ref{alg:prop} with $\eta_i \in (0, 1/L_i]$, where $L_i$ is defined as follows:
\begin{equation}
\label{eq:Li}
L_i = \sum_{j\in \mc N_i \cup \{i\} }\frac{\|G^j\|^2}{\sigma_j},
\end{equation}
in which $G^j = \col\{G_i^j, i\in \mc M_j \}$ and $\sigma_j$ is the strong convexity constant of $f_j(u_j)$.
Furthermore, let $q(\lambda)$ be defined by \eqref{eq:d_func} and $\lambda^{\star}$ be an optimal solution of the dual problem \eqref{eq:d_prob}. Then, 
\begin{enumerate}
	\item It holds that
	\begin{equation}
	\bb E \left(q(\lambda^{\star}) - q(\lambda(k))\right) \leq \frac{C}{(k+1)^2},
	\label{eq:conv}
	\end{equation}
	where $C$ is a non-negative constant.
	\item Hence, it also holds that
	\begin{equation}
		\lim_{k\to \infty} \bb E \left(q(\lambda^{\star}) - q(\lambda(k))\right) = 0,
		\label{eq:conv2}
	\end{equation}
	almost surely. \eod
\end{enumerate}
\end{theorem}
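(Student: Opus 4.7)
My plan is to adapt the classical FISTA/Nesterov convergence analysis to the dual ascent setting with stochastic communication. First I would verify that under Assumption~\ref{as:cost_f} (strong convexity of each $f_i$ with constant $\sigma_i$) together with Assumption~\ref{as:loc_set}, the dual function $q(\lambda)$ defined in \eqref{eq:d_func} is concave and continuously differentiable, with
\[
\nabla_{\lambda_i} q(\lambda) = G_i^i u_i(\lambda) + \sum_{j\in \mc N_i} G_i^j u_j(\lambda) - g_i,
\]
where $u_i(\lambda)$ is the unique minimizer in \eqref{eq:loc_d_func}. I would then bound the block-Lipschitz constant of $\nabla q$ along the direction $\lambda_i$ by exploiting that the primal map $\lambda^j \mapsto u_j(\lambda^j)$ is $(\|G^j\|/\sigma_j)$-Lipschitz---a standard consequence of $\sigma_j$-strong convexity combined with the first-order optimality condition---and summing the contributions of all $j\in\mc N_i\cup\{i\}$ to recover the constant $L_i$ in~\eqref{eq:Li}.

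Next, I would interpret Algorithm~\ref{alg:prop} as a randomized inexact variant of Algorithm~\ref{alg:std}. A useful observation is that $\beta_{\{i,i\}}=1$ forces $\xi_i^i(k)=\lambda_i(k)$ for every $k$, so agent $i$'s ``own'' extrapolation $\hat{\xi}_i^i(k)$ plays exactly the role of $\hat{\lambda}_i(k)$ in Algorithm~\ref{alg:std}. Conditioning on the natural filtration $\mathcal{F}_{k-1}$ generated by $\mc L(1),\dots,\mc L(k-1)$ and invoking the i.i.d.\ assumption, the gradient step in \eqref{eq:lambda_tv} is activated with probability $\alpha_i=\prod_{j\in\mc N_i}\beta_{\{i,j\}}>0$. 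Hence, in conditional expectation, \eqref{eq:lambda_tv} reduces to a deterministic Nesterov-type ascent with effective step-size $\alpha_i \eta_i$.

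The bulk of the proof would then mimic the potential-function argument of \cite{beck2009}. Concretely, I would introduce the Lyapunov-like quantity
\[
V(k) = \theta(k+1)^2\bigl(q(\lambda^{\star})-q(\lambda(k))\bigr) + \tfrac{1}{2}\|z(k)-\lambda^{\star}\|_{D}^{2},
\]
where $\|\cdot\|_D$ is a block-weighted norm encoding the inverse effective step-sizes $1/(\alpha_i\eta_i)$, and $z(k)$ is the auxiliary sequence obtained by rewriting the Nesterov extrapolation \eqref{eq:lambda_h_tv} in its standard ``three-point'' form. Combining the $L_i$-smoothness descent inequality with the defining recurrence $\theta(k+1)^2-\theta(k+1)=\theta(k)^2$ and taking conditional expectations so that the stochastic on/off updates average to their scaled deterministic counterparts, I expect to obtain $\mathbb{E}[V(k)\mid\mathcal{F}_{k-1}]\le V(k-1)$. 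Iterating, taking total expectations, and using the standard bound $\theta(k+1)\ge (k+2)/2$ gives \eqref{eq:conv} with an explicit constant $C$ proportional to $\|\lambda(0)-\lambda^{\star}\|_D^2$. Part~2 then follows either by letting $k\to\infty$ in \eqref{eq:conv}, or, for the almost-sure strengthening, by combining summability of $\sum_k C/(k+1)^2$ with Markov's inequality and the Borel--Cantelli lemma applied to the nonnegative sequence $q(\lambda^{\star})-q(\lambda(k))$.

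The main obstacle I anticipate is controlling the mismatch between the ``true'' extrapolated multiplier $\hat{\lambda}_j(k)$ and the possibly stale local copy $\hat{\xi}_j^i(k)$ that agent $i$ employs when solving \eqref{eq:u_tv}, which arises whenever the link toward some $j\in\mc M_i$ has been inactive. Propagating this discrepancy through the smoothness inequality without destroying the $\mc O(1/k^2)$ rate is delicate; my plan is to show via a coupling argument that, under Assumption~\ref{as:com_link}, this staleness is absorbed into the effective step-size rescaling $\alpha_i\eta_i$ when conditional expectations are taken, so that the usual Nesterov recursion is preserved in expectation and the rate is not degraded.
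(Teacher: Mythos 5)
Your overall architecture is the same as the paper's: block-wise dual smoothness constants $L_i$ obtained from the $(\|G^j\|/\sigma_j)$-Lipschitz primal solution map (the paper's Lemmas \ref{le:Lips_smooth}--\ref{le:desc}), a FISTA-type potential weighted by $1/(\alpha_i\eta_i)$ (your $\tfrac12\|z(k)-\lambda^\star\|_D^2$ is exactly the paper's $V(k)$ in \eqref{eq:V}, since $z_i(k)-\lambda_i^\star$ is the paper's $\omega_i(k)$ in \eqref{eq:omega}), conditional expectation plus telescoping, the bound $\theta(k)\geq (k+1)/2$, and Markov's inequality for Part 2. The gap is in the mechanism you give for handling the randomness. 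It is not true that \eqref{eq:lambda_tv} ``reduces in conditional expectation to a deterministic Nesterov ascent with effective step-size $\alpha_i\eta_i$'': the quantities entering the analysis, $q(\lambda(k))$ and $\|\omega_i(k)\|^2$, are nonlinear in the iterates, so you cannot argue via the expected iterate, and the theorem's step-size condition is $\eta_i\in(0,1/L_i]$ with no dependence on $\alpha_i$, which already indicates that no step-size rescaling occurs. What actually closes the argument in the paper is a pathwise identity that your proposal never states: on the event that agent $i$ does not perform the gradient step, $\lambda_i(k+1)=\hat\xi_i^i(k+1)$ substituted into the extrapolation \eqref{eq:lambda_h_tv} (together with $\xi_i^i(k)=\lambda_i(k)$) gives $\omega_i(k+1)=\omega_i(k)$ \emph{exactly}, not merely in expectation. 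Hence $\|\omega_i(k+1)\|^2$ equals the gradient-updated value with probability $\alpha_i$ and is unchanged with probability $1-\alpha_i$; the $1/\alpha_i$ weights then cancel in the conditional expectation, and the decrement of $V$ is bounded by the deterministic inequality \eqref{eq:main_ineq} of Lemma \ref{le:main_ineq}, whose unweighted dual-gap terms telescope. Without this invariance, your claimed bound $\bb E[V(k)\mid\mc F_{k-1}]\le V(k-1)$ does not follow from the descent inequality.

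Relatedly, the obstacle you flag at the end (staleness of $\hat\xi_j^i$ versus the ``true'' extrapolated multipliers) is resolved in your plan only by an unspecified ``coupling argument'' that again absorbs the error into an effective step size $\alpha_i\eta_i$; this is not a proof, and it is not how the paper proceeds (the paper applies Lemma \ref{le:main_ineq}, derived for the consistent-information Algorithm \ref{alg:std}, directly to the gradient-updated quantities, with no rescaling). So, while your skeleton is the right one, the proposal is missing the key identity $\omega_i(k+1)=\omega_i(k)$ on non-update events and replaces it with an expectation-of-the-iterate argument that would not survive being made precise.
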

%\begin{proof}
%	See Section \ref{sec:conv}.
%\end{proof}

Theorem \ref{th:conv} shows that the expected dual values converge to the optimal dual value with the rate of $\mc O(1/k^2)$. Furthermore, the choice of parameter $\eta_i$, for each agent $i\in \mc N$, which is sufficient to achieve convergence, can be obtained locally, i.e., agent $i$ only requires some information from its in-neighbors in $\mc N_i$ (see \eqref{eq:Li}). 
%%%%%%%%%%%%%%%%%%%%%%
\section{Convergence analysis}
\label{sec:conv}
%This section is dedicated to show the convergence analysis, which results in Theorem \ref{th:conv}. 
First, Section \ref{sec:pre_res} provides some preliminary results, which become the building blocks to prove Theorem \ref{th:conv}. Then, the proof of Theorem \ref{th:conv} is given in Section \ref{sec:proof_th}.
\subsection{Preliminary results}
\label{sec:pre_res}
%\vspace{-5pt}
%\paragraph{Lipschitz smooth local dual functions}
First, we show that the local dual function, $q_i(\lambda^i)$, for any $i\in\mc N$, is a Lipschitz smooth function.
\begin{lemma}
	\label{le:Lips_smooth}
	Let Assumptions \ref{as:cost_f}-\ref{as:feas_set} hold. The local dual function $q_i(\lambda^i)$ defined in \eqref{eq:loc_d_func} is Lipschitz smooth with Lipschitz constant $\frac{\|G^i\|^2}{\sigma_i}$. \eod
\end{lemma}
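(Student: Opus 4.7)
The plan is to differentiate $q_i$ through its unique inner minimizer and then exploit strong convexity to control the sensitivity of that minimizer to $\lambda^i$. First, under Assumptions~\ref{as:cost_f} and \ref{as:loc_set}, the objective inside \eqref{eq:loc_d_func} is $\sigma_i$-strongly convex in $u_i$ on a nonempty compact convex set, so the inner problem admits a unique minimizer, which I denote $u_i^\star(\lambda^i)$. Stacking $G^i = \col\{G_j^i,\, j\in\mc M_i\}$, the coupling term in \eqref{eq:loc_d_func} can be rewritten as $\langle \lambda^i, G^i u_i\rangle$, so the objective becomes $f_i(u_i) + \langle G^{i\top}\lambda^i, u_i\rangle - \langle\lambda_i, g_i\rangle$.

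Next, I would invoke Danskin's theorem (applicable because the inner minimizer is unique) to compute $\nabla q_i(\lambda^i)$: its block corresponding to $\lambda_j$, $j\in\mc M_i$, is $G_j^i u_i^\star(\lambda^i)$, with the constant $-g_i$ added in the $\lambda_i$-block. For any two points $\lambda^i, \mu^i$, the constant cancels in the difference, which yields
\[
\|\nabla q_i(\lambda^i) - \nabla q_i(\mu^i)\| \;=\; \|G^i\bigl(u_i^\star(\lambda^i) - u_i^\star(\mu^i)\bigr)\| \;\leq\; \|G^i\|\,\|u_i^\star(\lambda^i) - u_i^\star(\mu^i)\|.
\]

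The key remaining step is to bound $\|u_i^\star(\lambda^i) - u_i^\star(\mu^i)\|$ linearly in $\|\lambda^i-\mu^i\|$. I would write the first-order optimality condition at each point as a variational inequality on $\mc U_i$, cross-test (substitute each minimizer as the test vector in the other inequality), and add the two. The normal-cone terms cancel and I am left with
\[
\langle \nabla f_i(u_i^\star(\lambda^i)) - \nabla f_i(u_i^\star(\mu^i)),\, u_i^\star(\lambda^i) - u_i^\star(\mu^i)\rangle \;\leq\; \langle G^{i\top}(\mu^i-\lambda^i),\, u_i^\star(\lambda^i) - u_i^\star(\mu^i)\rangle.
\]
Applying Property~\ref{def:strong_cvx} on the left and Cauchy--Schwarz together with $\|G^{i\top}\|=\|G^i\|$ on the right, then dividing out $\|u_i^\star(\lambda^i)-u_i^\star(\mu^i)\|$ (the bound being trivial otherwise), gives $\|u_i^\star(\lambda^i) - u_i^\star(\mu^i)\| \leq (\|G^i\|/\sigma_i)\,\|\lambda^i - \mu^i\|$. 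Chaining this with the previous display delivers the Lipschitz constant $\|G^i\|^2/\sigma_i$.

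The main obstacle is the sensitivity bound on the constrained minimizer, since $\mc U_i$ is an arbitrary compact convex set rather than the whole space; the variational-inequality-plus-strong-convexity argument handles it cleanly, but one must be careful to invoke both the strong convexity of $f_i$ (needed for uniqueness, which in turn enables Danskin, and for the sensitivity bound) and the convexity of $\mc U_i$ (so that the cross-test is admissible).
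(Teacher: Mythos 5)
Your proposal is correct and follows essentially the same route as the paper's proof: the Danskin-type gradient formula $\nabla q_i(\lambda^i)=G^i u_i^\star(\lambda^i)-\tilde g^i$, cross-tested optimality conditions on $\mc U_i$, strong convexity of $f_i$, and Cauchy--Schwarz. The only cosmetic difference is that you first extract the $\|G^i\|/\sigma_i$-Lipschitz continuity of the argmin map and then compose with $G^i$, whereas the paper combines the same inequalities directly at the level of the gradient difference; both yield the constant $\|G^i\|^2/\sigma_i$.
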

\begin{proof}
	Recall the definition of $q_i(\lambda^i)$ in \eqref{eq:loc_d_func} and let $u_i(\lambda^i) = \arg\min_{u_i \in \mc U_i}\hspace{-3pt}\left\{ f_i(u_i) + \sum_{j\in \mc M_i} \langle G_j^{i\top}\lambda_j, u_i \rangle \right\}$ and $v_i(\mu^i) \hspace{-3pt}= \arg\min_{u_i \in \mc U_i}\left\{ f_i(u_i) + \sum_{j\in \mc M_i} \langle G_j^{i\top}\mu_j, u_i \rangle \right\}$. Since $u_i(\lambda^i),v_i(\mu^i) \in \mc U_i$, the optimality conditions \cite{nedic2008} of the preceding minimizations  yield the following inequalities:
	\begin{align}
	0 &\leq  \langle \nabla f_i(u_i(\lambda^i)) +   G^{i\top}\lambda^i, v_i(\mu^i) - u_i(\lambda^i) \rangle,  \label{eq:ineq1}\\
	0 &\leq  \langle \nabla f_i(v_i(\mu^i)) +   G^{i\top}\mu^i, u_i(\lambda^i) - v_i(\mu^i) \rangle. \label{eq:ineq2}
	\end{align}
	Combining \eqref{eq:ineq1} and \eqref{eq:ineq2} gives
	\begin{align}
		0 & \leq \langle \nabla f_i(u_i(\lambda^i)) -\nabla f_i(v_i(\mu^i)), v_i(\mu^i) - u_i(\lambda^i) \rangle  
		\notag\\
		& \quad + \langle G^{i\top}(\lambda^i - \mu^i), v_i(\mu^i) - u_i(\lambda^i) \rangle \notag\\
		& \leq -\sigma_i \|v_i(\mu^i) - u_i(\lambda^i) \|^2 \notag\\
		& \quad + \langle \lambda^i - \mu^i,  G^{i}(v_i(\mu^i) - u_i(\lambda^i)) \rangle, \label{eq:ineq3}
	\end{align}
	where the second inequality is obtained since $f_i(\cdot)$ is strongly convex (c.f. Property \ref{def:strong_cvx}). Furthermore, the strong convexity of $f_i(\cdot)$ also implies that $u_i(\lambda^i)$ is unique and $q_i(\lambda^i)$ is differentiable, with $\nabla q_i(\lambda^i) = G^i u_i(\lambda^i)-\tilde{g}^i$, where $\tilde{g}^i = \col\{\tilde{g}_j^i,j\in\mc M_i \}$ and $\tilde{g}_j^i = 0_{m_j}$ if $j\neq i$ and $\tilde{g}_j^i=g_i$ otherwise. Thus,$\nabla q_i(\mu^i) - \nabla q_i(\lambda^i)= G^i(v_i(\mu^i) - u_i(\lambda^i))$. Using \cite[Lemma 1.1]{beck2014} we obtain that
	\begin{align}
		\frac{1}{\|G^i\|^2} \|\nabla q_i(\mu^i) - \nabla q_i(\lambda^i)\|^2 \leq \|v_i(\mu^i) - u_i(\lambda^i)\|^2. \label{eq:ineq4}
	\end{align}
	By adding $\langle \lambda^i - \mu^i, \tilde{g}^i-\tilde{g}^i \rangle=0$ to the right-hand side of \eqref{eq:ineq3}, and then rearranging \eqref{eq:ineq3} as well as using \eqref{eq:ineq4} and the fact that $G^i v_i(\mu^i)-\tilde{g}^i= \nabla q_i(\mu^i)$ and $G^i u_i(\lambda^i)-\tilde{g}^i = \nabla q_i(\lambda^i)$, we obtain that
	\begin{align*}
		&\frac{\sigma_i}{\|G^i\|^2} \|\nabla q_i(\mu^i) - \nabla q_i(\lambda^i)\|^2 \\
		&\leq \langle {\lambda^i - \mu^i}, \nabla q_i(\mu^i) - \nabla q_i(\lambda^i)\rangle\\
		&\leq \|\mu^i-\lambda^i \|\|\nabla q_i(\mu^i) - \nabla q_i(\lambda^i)\|,
	\end{align*}
	where the second inequality is obtained using the Cauchy-Schwarz inequality. Thus, we have that
	\begin{equation*}
		\|\nabla q_i(\mu^i) - \nabla q_i(\lambda^i)\| \leq \frac{\|G^i\|^2}{\sigma_i}\|\mu^i-\lambda^i\|, %\label{eq:lips_dfunc}
	\end{equation*}
	showing that $q_i(\cdot)$ is Lipschitz smooth with Lipschitz constant $\frac{\|G^i\|^2}{\sigma_i}$ (c.f. Property \ref{def:smooth}).
\end{proof}
\begin{remark}
	The Lipschitz constant of $q_i(\cdot)$ can be computed locally by each agent $i \in \mc N$ since  $G^i$ and  parameter $\sigma_i$ are local information. \eod
\end{remark} 

\begin{lemma}%[Descent lemma]
	\label{le:desc}
	Let Assumptions \ref{as:cost_f}-\ref{as:feas_set} hold. For any $\mu,\lambda \in \bb R^{\sum_{i \in \mc N}m_i}$, it holds that
	\begin{equation}
		q(\lambda) \geq q(\mu) + \langle \lambda-\mu, \nabla q(\mu) \rangle - \sum_{i \in \mc N} \frac{L_i}{2} \| \lambda_i - \mu_i\|^2,
	\end{equation}
	where $L_i$, for each $i\in\mc N$, is defined in \eqref{eq:Li}.

\end{lemma}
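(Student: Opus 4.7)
The plan is to apply the standard quadratic lower bound associated with Lipschitz smoothness to each local dual function $q_i$, sum the resulting inequalities over $i \in \mc N$, and then reindex the outer sum so that the coefficients match the definition of $L_i$ in \eqref{eq:Li}. Specifically, Lemma \ref{le:Lips_smooth} guarantees that $q_i$ has a gradient that is Lipschitz continuous with constant $\|G^i\|^2/\sigma_i$, and since any differentiable $f$ with $L$-Lipschitz gradient satisfies $f(y) \geq f(x) + \langle \nabla f(x), y-x\rangle - \frac{L}{2}\|y-x\|^2$ (a direct consequence of the fundamental theorem of calculus, requiring no convexity), applying this to $q_i$ with $y = \lambda^i$ and $x = \mu^i$ gives
\begin{equation*}
q_i(\lambda^i) \geq q_i(\mu^i) + \langle \lambda^i - \mu^i, \nabla q_i(\mu^i)\rangle - \frac{\|G^i\|^2}{2\sigma_i}\|\lambda^i - \mu^i\|^2.
\end{equation*}
Summing over $i \in \mc N$ and using the additive structure $q = \sum_i q_i$ from \eqref{eq:d_func} yields a bound on $q(\lambda) - q(\mu)$ with a linear term $\sum_i \langle \lambda^i - \mu^i, \nabla q_i(\mu^i)\rangle$ and a quadratic penalty $\sum_i \frac{\|G^i\|^2}{2\sigma_i}\|\lambda^i - \mu^i\|^2$.

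Next I would unfold both sums using the block structure $\lambda^i = \col\{\lambda_j : j \in \mc M_i\}$. Swapping the order of summation in the linear term produces $\sum_j \langle \lambda_j - \mu_j, \sum_{i:\, j \in \mc M_i} \nabla_{\lambda_j} q_i(\mu^i) \rangle$, which equals $\langle \lambda - \mu, \nabla q(\mu) \rangle$ because $\nabla_{\lambda_j} q(\mu) = \sum_{i:\, j \in \mc M_i} \nabla_{\lambda_j} q_i(\mu^i)$. This recovers the cross term in the target inequality with no further work.

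The only genuinely delicate step is the reindexing of the quadratic term. Writing $\|\lambda^i - \mu^i\|^2 = \sum_{j \in \mc M_i}\|\lambda_j - \mu_j\|^2$ and exchanging the order of summation converts the penalty into $\sum_j \|\lambda_j - \mu_j\|^2 \sum_{i:\, j \in \mc M_i} \|G^i\|^2/\sigma_i$. It then suffices to identify the index set $\{i : j \in \mc M_i\}$ with $\mc N_j \cup \{j\}$: by the definition of $\mc M_i$ in Section \ref{sec:setup}, $j \in \mc M_i$ is equivalent to $(i,j) \in \mc V$ or $i = j$, which by the definition of $\mc N_j$ is equivalent to $i \in \mc N_j \cup \{j\}$. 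This identification turns the inner sum into precisely $L_j$ from \eqref{eq:Li}, and the desired inequality follows after relabeling $j$ back to $i$.
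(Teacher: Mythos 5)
Your proof is correct and follows essentially the same route as the paper: apply the quadratic lower bound implied by Lemma \ref{le:Lips_smooth} to each local dual function $q_i$ and sum over $i\in\mc N$. The only difference is that you spell out the reindexing step (identifying $\{i : j\in\mc M_i\}$ with $\mc N_j\cup\{j\}$ so the per-block coefficients assemble into $L_i$), which the paper compresses into ``the desired inequality follows by summing''; your bookkeeping there is accurate.
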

\begin{proof}
	Since $q_i(\lambda_i)$ is concave and has a Lipschitz smooth gradient (Lemma \ref{le:Lips_smooth}), it follows from \cite{zhou2018} that 
	\begin{equation}
		q_i(\lambda^i) \geq q_i(\mu^i) + \langle \lambda^i-\mu^i, \nabla q_i(\mu^i)  \rangle - \frac{\|G^i\|^2}{2\sigma_i} \| \lambda^i - \mu^i\|^2. \label{eq:desc_loc}
	\end{equation}
	The desired inequality follows by summing \eqref{eq:desc_loc} over $i\in \mc N$.
\end{proof}

The Lipschitz smoothness property of the dual function (Lemma \ref{le:desc}) is sufficient to show the inequality \eqref{eq:main_ineq} stated in Lemma \ref{le:main_ineq}, which will become the key to prove Theorem \ref{th:conv}. Note that Lemma \ref{le:main_ineq} is similar to \cite[Lemma 4.1]{beck2009} and \cite[Lemma 5]{goldstein2014}, although, differently from these references,  the step-size $\eta_i$ in \eqref{eq:lambda_std} does not need to be the Lipschitz constant of the (dual) function.
%\paragraph{Lyapunov function}
\begin{lemma}
	\label{le:main_ineq}
	Let Assumptions \ref{as:cost_f}-\ref{as:feas_set} hold and the sequence $\{\theta(k), u_i(k), \lambda_i(k),\hat{\lambda}_i(k), \forall i \in \mc N  \}$ be generated by Algorithm \ref{alg:std}, with $\eta_i \in (0, 1/L_i]$, where $L_i$ is defined by \eqref{eq:Li}. Furthermore, let $\lambda^{\star}= \col\{\lambda_i^{\star}, i\in \mc N \}$ be an optimal solution of the dual problem \eqref{eq:d_prob} and define $\omega_i(k)$ by
	\begin{equation}
		\label{eq:omega}
		\omega_i(k) = \theta(k)\lambda_i(k)- (\theta(k)-1)\lambda_i(k-1)-\lambda_i^{\star},
	\end{equation} 
	for each $i\in \mc N$. Then, it holds that
	%\vspace{-2pt}
	\begin{equation}
		\label{eq:main_ineq}
		\begin{aligned}
		& \sum_{i \in \mc N} \frac{1}{2\eta_i}\left(\|\omega_i(k+1)\|^2 - \|\omega_i(k) \|^2\right) \leq \\
		& \quad (\theta(k))^2(q(\lambda^{\star})-q(\lambda(k)))\\
		& \quad -(\theta(k+1))^2(q(\lambda^{\star})-q(\lambda(k+1))).
		\end{aligned}	
	\end{equation}
	%for each $i \in \mc N$, where $\lambda^{i\star}=\col\{\lambda_j^{\star}, j \in mc M_i \}$. 
	%\eod
\end{lemma}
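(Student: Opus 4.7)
The plan is to view Algorithm \ref{alg:std} as a Nesterov-type accelerated gradient ascent on the concave dual $q$ and to reproduce the standard FISTA-style telescoping argument in this dual setting. First, I would note that $q$ is concave (as a pointwise minimum of affine functions of $\lambda$) and observe that the first-order optimality condition for the inner minimization in \eqref{eq:u_std}, together with the identity $\nabla q_i(\lambda^i) = G^i u_i(\lambda^i) - \tilde g^i$ derived in the proof of Lemma \ref{le:Lips_smooth}, identifies the bracketed expression in \eqref{eq:lambda_std} with $\nabla_{\lambda_i} q(\hat\lambda(k+1))$. Hence \eqref{eq:lambda_std} is precisely the gradient ascent step $\lambda_i(k+1) = \hat\lambda_i(k+1) + \eta_i \nabla_{\lambda_i} q(\hat\lambda(k+1))$.

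Next, I would establish the following core one-step inequality, valid for every $z$:
\begin{equation*}
q(z) - q(\lambda(k+1)) \leq \sum_{i \in \mc N} \frac{1}{2\eta_i}\left(\|z_i - \hat\lambda_i(k+1)\|^2 - \|z_i - \lambda_i(k+1)\|^2\right).
\end{equation*}
This comes from combining (i) the concavity bound $q(z) \leq q(\hat\lambda(k+1)) + \langle z - \hat\lambda(k+1), \nabla q(\hat\lambda(k+1))\rangle$ with (ii) Lemma \ref{le:desc} applied at $\mu = \hat\lambda(k+1)$, $\lambda = \lambda(k+1)$. Because $\eta_i \leq 1/L_i$, the quadratic term in Lemma \ref{le:desc} is controlled by $\tfrac{1}{2\eta_i}\|\lambda_i(k+1)-\hat\lambda_i(k+1)\|^2$, and substituting $\lambda_i(k+1) - \hat\lambda_i(k+1) = \eta_i \nabla_{\lambda_i} q(\hat\lambda(k+1))$ produces the displayed bound via the elementary identity $\langle a-z, b-a\rangle + \tfrac{1}{2}\|b-a\|^2 = \tfrac{1}{2}(\|b-z\|^2 - \|a-z\|^2)$.

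I would then apply this inequality at $z = \lambda(k)$ and $z = \lambda^{\star}$, multiply by $(\theta(k+1)-1)$ and by $1$ respectively, sum, and multiply the result by $\theta(k+1)$. The algebraic identity $\theta(k+1)(\theta(k+1)-1) = \theta(k)^2$, which follows directly from the update rule for $\theta$, makes the left-hand side collapse to $\theta(k+1)^2(q(\lambda^{\star})-q(\lambda(k+1))) - \theta(k)^2(q(\lambda^{\star})-q(\lambda(k)))$, which is precisely the negative of the target quantity in \eqref{eq:main_ineq}.

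The main obstacle, and the step that deserves the most care, is to show that the weighted right-hand side produced by this combination simplifies, term by term in $i$, to $\tfrac{1}{2\eta_i}(\|\omega_i(k)\|^2 - \|\omega_i(k+1)\|^2)$. This is the classical FISTA pythagorean identity: using the extrapolation rule \eqref{eq:lambda_h_std} in the form $\theta(k+1)\hat\lambda_i(k+1) = \theta(k+1)\lambda_i(k) + (\theta(k)-1)(\lambda_i(k)-\lambda_i(k-1))$, together with the definition \eqref{eq:omega} of $\omega_i$, the cross-terms on both sides regroup exactly into the $\omega_i(k)$ and $\omega_i(k+1)$ combinations, and the residual scalar coefficients cancel courtesy of $\theta(k+1)(\theta(k+1)-1) = \theta(k)^2$. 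Carrying out this expansion and rearranging yields \eqref{eq:main_ineq}.
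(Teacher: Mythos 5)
Your proposal is correct and follows essentially the same route as the paper: your core one-step inequality is the paper's Lemma~\ref{le:lb_dif_df} (inequality \eqref{eq:lb_dif_df}) rewritten in squared-distance form by completing the square, and the weighted combination at $z=\lambda(k)$ and $z=\lambda^{\star}$ together with $\theta(k+1)(\theta(k+1)-1)=\theta(k)^2$ is the same FISTA-style telescoping argument the paper carries out. The only difference is bookkeeping order: the paper first expands $\|\omega_i(k+1)\|^2-\|\omega_i(k)\|^2$ via $\omega_i(k+1)=\omega_i(k)+\theta(k+1)\left(\lambda_i(k+1)-\hat{\lambda}_i(k+1)\right)$ and then applies the one-step bound twice, whereas you combine the two bounds first and regroup the squared distances into the $\omega_i$ terms at the end.
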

\smallskip
\begin{proof}%\color{blue}
%The proof is omitted. It uses Lemma \ref{le:desc} and follows the proof of Lemma 2.3 in \cite{beck2009}. \color{black}
	see Appendix \ref{sec:app1}.
\end{proof}
%Next, we present the proof of Theorem \ref{th:conv}, which relies on the result stated in Lemma \ref{le:main_ineq}.
%\vspace{-3pt}
\subsection{Proof of Theorem \ref{th:conv}}
\label{sec:proof_th}
%\paragraph{Convergence proof}
%Now, we are ready to prove Theorem \ref{th:conv}.
%The proof relies on the inequality \eqref{eq:main_ineq} shown in Lemma \ref{le:main_ineq}.  
Recall that $\alpha_i$ is the probability that the communication links between agent $i$ and all its in-neighbors $j\in \mc N_i$ are active, i.e., $\alpha_i = \prod_{j\in \mc N_i} \beta_{\{i,j\}}$ and introduce the following function $V(k)$:
\begin{equation}
	V(k) = \sum_{i \in \mc N} \frac{1}{2\alpha_i \eta_i}\|\omega_i(k)\|^2,
	\label{eq:V}
\end{equation}
where $\omega_i(k)$ is defined in \eqref{eq:omega}. 
%similarly to $\omega_i(k)$ but accommodates $\xi_j^i$, as follows:
%\begin{equation}
%	\tilde{\omega}_i(k) = \theta(k)\xi_i^i(k)- (\theta(k)-1)\xi_i^i(k-1)-\lambda_i^{\star}.
%\end{equation} 

To show the convergence, first we evaluate the sequence $\{\bb E(V(k))\}$. To this end, define $\mc F(k)$ as the filtration up to and including iteration $k$, i.e., $\mc F(k) = \{\mc L(\ell), \lambda(\ell), \xi(\ell),\  \ell = 0,1,2,\dots,k  \}$, where $\xi(k) = \col\{\xi^i(k), i\in \mc N \}$. Based on \eqref{eq:lambda_tv}, $\lambda_i(k)$, for each $i\in\mc N$, is updated with the gradient ascent rule only when all the in-neighbors of agent $i$ in $\mc N_i$ send new information to agent $i$. Otherwise, $\lambda_i(k)= \hat{\xi}_i^i(k)$. %Moreover, $\xi_j^i(k+1)$ is updated if new information is received. If agent $j$ is a neighbor of agent $i$, i.e., $j\in \mc N_i$, and it sends $G_i^ju_j(k)$ to agent $i$, implying the communication link between these agents is available (Step 2 of Algorithm \ref{alg:prop}), then it will receive back ${\lambda}_i(k)$. Furthermore, if ${\lambda}_i(k)$ is updated with the gradient ascent rule, then all agents $j\in \mc N_i$ will receive this new information.
 Therefore, if $\mc N_i \subseteq \mc E_i(k+1)$, $\omega_i(k+1)$ is computed using $\lambda_i(k+1)$ updated with the gradient ascent step. Otherwise, since $\lambda_i(k+1) = \hat{\xi}_i^i(k+1)$ (c.f. \eqref{eq:lambda_tv}), we have that
\begin{align*}
	\omega_i(k+1) &= \theta(k+1)\hat{\xi}_i^i(k+1)- (\theta(k+1)-1)\lambda_i(k)-\lambda_i^{\star}\\
	%&= \theta(k+1)\xi_i^i(k)+ (\theta(k)-1)(\xi_i^i(k)-\xi_i^i(k-1)) \\
	%&\quad 	- (\theta(k+1)-1)\lambda_i(k)-\lambda_i^{\star}\\
	&=\theta(k+1)\lambda_i(k)+ (\theta(k)-1)(\lambda_i(k)-\lambda_i(k-1)) \\
	&\quad 	- (\theta(k+1)-1)\lambda_i(k)-\lambda_i^{\star}\\
	&
	= \omega_i(k),
\end{align*}
where the second equality is obtained by using \eqref{eq:lambda_h_tv} and  since $\lambda_i(k)=\xi_i^i(k)$, for any $k \geq 0$, due to \eqref{eq:xi} and a proper initialization in Algorithm \ref{alg:prop}. 

Thus, we can see that $\omega(k+1)$ is updated with probability $\alpha_i$ and remains the same, i.e., $\omega_i(k+1)=\omega_i(k)$ with probability $1-\alpha_i$. Based on this fact, we obtain, with probability 1, that
\begin{align}
	&\bb E\left(V(k+1)-V(k)| \mc F(k) \right) \notag \\
	&=\bb E \left(\sum_{i \in \mc N}\frac{1}{2\alpha_i \eta_i}\left(\|\omega_i(k+1)\|^2-\|\omega_i(k)\|^2\right) \Bigg| \mc F(k) \right) \notag\\
	&=\sum_{i \in \mc N}\frac{1}{2\eta_i}\left(\frac{\alpha_i}{\alpha_i}\|\omega_i(k+1)\|^2 + \frac{1-\alpha_i}{\alpha_i}\|\omega_i(k)\|^2 \right. \notag\\
	& \qquad \quad  \left. - \frac{1}{\alpha_i}\|\omega_i(k)\|^2  \right) \notag \\
	&= \sum_{i \in \mc N}\frac{1}{2\eta_i} \left(\|\omega_i(k+1)\|^2 - \|\omega_i(k) \|^2\right) \notag\\	
	&\leq (\theta(k))^2(q(\lambda^{\star})-q(\lambda(k))) \notag\\
	& \quad -(\theta(k+1))^2(q(\lambda^{\star})-q(\lambda(k+1))),
	\label{eq:ineq5}
\end{align}
where the inequality is obtained based on \eqref{eq:main_ineq} in Lemma \ref{le:main_ineq}. Iterating \eqref{eq:ineq5}, for $\ell =1,2,\dots,k-1$, and taking the total expectation, we have that
\begin{align}
	&\bb E\left(\sum_{\ell=1}^{k-1} \left(V(\ell+1)-V(\ell)\right) \right) \notag\\
	&\leq \bb E \Bigg(\sum_{\ell=1}^{k-1}(\theta(\ell))^2(q(\lambda^{\star})-q(\lambda(\ell))) \Bigg. \notag\\
	&\Bigg. \quad -(\theta(\ell+1))^2(q(\lambda^{\star})-q(\lambda(\ell+1))) \Bigg) \notag\\
	& \iff \bb E(V(k) - V(1)) \leq \theta(1)^2 \bb E \left( q(\lambda^{\star})-q(\lambda(1)) \right) \notag \\
	& \qquad -\bb E ( \theta(k))^2(q(\lambda^{\star})-q(\lambda(k))) ). \label{eq:ineq6}
\end{align}
Rearranging the inequality in \eqref{eq:ineq6} yields
\begin{align}
&	\bb E \left( \theta(k)^2(q(\lambda^{\star})-q(\lambda(k))) \right) \notag\\ 
&	\leq \bb E( V(1) - V(k)) +\theta(1)^2 \bb E \left(q(\lambda^{\star})- q(\lambda(1)) \right) \notag\\
&	\leq \bb E (V(1) + q(\lambda^{\star})- q(\lambda(1))), \label{eq:ineq7}
\end{align}
where the second inequality is obtained since $\theta(1)=1$ and by dropping $-\bb E(V(k))$ since it is non-positive for any $k\geq1$. Finally, note that $\theta(k)$ is not random and it holds that $\theta(k) \geq \frac{k+1}{2}$ since $\theta(1)=1$ and it is updated using the equation in step 6 of Algorithm \ref{alg:prop} \cite{beck2009}. Using this fact and \eqref{eq:ineq7}, the desired inequality \eqref{eq:conv} follows, where $ C = 4\bb E \left(V(1) + q(\lambda^{\star})- q(\lambda(1))\right) \geq 0,$ since $\bb E(V(k))\geq0$, for any $k\geq 1$, and $q(\lambda^{\star}) =\max_{\lambda}q(\lambda)$, thus $\bb E(q(\lambda^{\star})- q(\lambda(1)) \geq 0$. 
Upon obtaining \eqref{eq:conv}, we can show the equality \eqref{eq:conv2}. 	Since $C$ in \eqref{eq:conv} is non-negative, the term	$\bb E \left(q(\lambda^{\star}) - q(\lambda(k))\right)$ converges to 0. Furthermore, using the Markov inequality, for any $\delta \in \bb R_{>0}$, we have that
%\begin{align*}
	%&
	$\limsup_{k \to \infty} \bb P(q(\lambda^{\star}) - q(\lambda(k) \geq \delta) %\\
	%& 
	\leq \limsup_{k \to \infty} \frac{1}{\delta}\bb E(q(\lambda^{\star}) - q(\lambda(k) ) = 0,$ 
%\end{align*} 
thus, $\lim_{k\to \infty} \bb E \left(q(\lambda^{\star}) - q(\lambda(k))\right) = 0$, almost surely.
\eod
%%%%%%%%%%%%%%%%%%%%%%
\section{Numerical study}

\label{sec:num_st}
%\paragraph{test case description}
\begin{figure}
	\centering
	\includegraphics[scale=0.9]{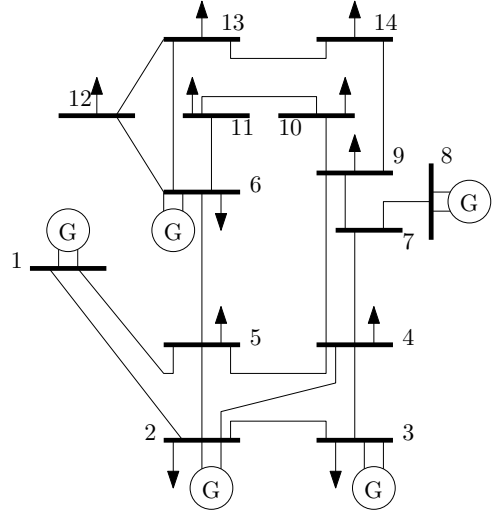}
	\caption{The IEEE 14-bus network.
	}
	\label{fig:14-bus}
\end{figure}

We use the IEEE 14-bus benchmark case, which is shown in Figure \ref{fig:14-bus}, as the test case in this simulation study, where we solve an intra-day DC-OPF problem, with time horizon ($h$) of 6 hourly steps. We suppose that each bus is an agent in the network, though there are only five active agents, which have the capability of generating power, bounded by the capacity of the generators. Furthermore, we consider the DC-approximation of the power flow equations, as follows:
\begin{equation}
P^{\rm g}_{i,t} - P^{\rm l}_{i,t} = \sum_{j\in \mc N_i}B_{\{i,j\}}(\psi_{i,t} - \psi_{j,t}), \ \forall i \in \mc N, t=1,\dots,h,
\label{eq:dc_pf}
\end{equation}
where $P^{\rm g}_{i,t} \in \bb R_{\geq 0}$  denotes the power generated at bus $i$ at time step $t$, $P^{\rm l}_{i,t} \in \bb R_{\geq 0}$ denotes the power demand assumed to be known for the whole time horizon, $B_{\{i,j\}}$ denotes the susceptance of line $\{i,j\}$,  whereas $\psi_i$ denotes the phase angle of bus $i$.
The equalities in \eqref{eq:dc_pf} become the coupling constraints of the network. In this  problem, we compute the hourly set points of each generator for the whole time horizon.  Additionally, we consider a strongly convex quadratic local cost.

We suppose that the communication links among the agents may fail with certain probability, denoted by $\gamma > 0$. This implies that the activation probability of each communication link is equal, i.e., $\beta_{\{i,j\}}=1-\gamma$, for each $i,j\in \mc N$, where $i\neq j$, and we perform 10 Monte-Carlo simulations for different values of $\gamma$. \color{black}Moreover, we also compare Algorithm \ref{alg:prop} with the unaccelerated version, where $\theta(k)=1$ and $\gamma=0$, for all $k\geq1$. \color{black}Figure \ref{fig:sim_res} shows the convergence of the coupling constraint $\nabla q(\lambda(k))$ toward 0 and the dual value $q(\lambda(k))$ toward the optimal value $q^{\star}$. 
Additionally, Figure \ref{fig:box} shows the number of iterations required to meet the stopping criteria, which is  the error of the equality constraint, i.e., $\|G_i^i u_i(k)+\sum_{j\in \mc N_i}G_i^ju_j(k)-g_i\| < \epsilon$, for a small $\epsilon \geq 0$. 
 As expected, \color{black} Algorithm \ref{alg:prop} significantly outperforms the unaccelerated version, and \color{black} the smaller $\gamma$, the faster the convergence. 
%Furthermore, Figure \ref{fig:sim_res} also  shows the graphs when $\gamma=0$ and a standard dual ascent method \cite{bibid}, i.e., without acceleration steps, is used. Note that the standard dual ascent algorithm has not met the stopping criterion, which is the error of the equality constraint, i.e., $\|\nabla_{\lambda_i}q(\lambda)\|=\|G_i^i u_i(k)+\sum_{j\in \mc N_i}G_i^ju_j(k)-g_i\| < \epsilon$, for a small $\epsilon \geq 0$, when the algorithm is stopped.

%\paragraph{simulation results} Comparison with standard dual ascent. Convergence with different $\beta_{ij}$.

\begin{figure}
	\centering
	\includegraphics[scale=0.42]{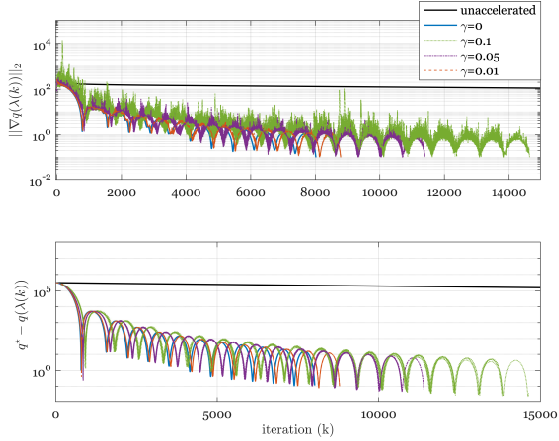}
	\caption{Convergence of $\nabla q(\lambda(k))$ (top) and $q(\lambda(k))-q^{\star}$ (bottom).
	}
	\label{fig:sim_res}
\end{figure}

\begin{figure}
	\centering
	\includegraphics[scale=0.57]{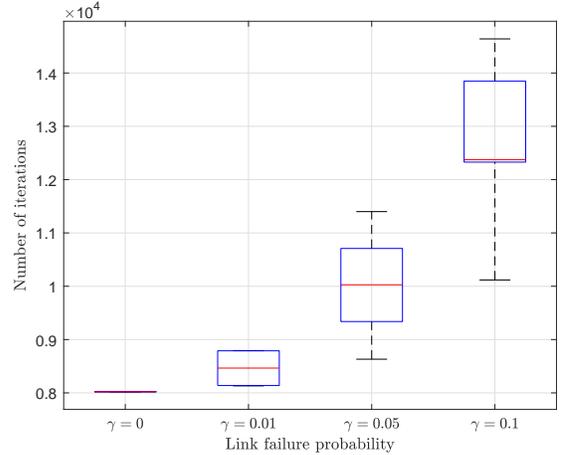}
	\caption{The number of iterations performed for different values of $\gamma$. The blue boxes indicate the 25\textsuperscript{th}-75\textsuperscript{th} percentiles, the red lines indicate the median, and the + symbols indicate the outliers.
	}
	\label{fig:box}
\end{figure}
%%%%%%%%%%%%%%%%%%%%%%
\section{Conclusion}
\label{sec:concl}
In this paper, we propose a distributed algorithm for multi-agent optimization problem over stochastic networks. The algorithm is based on Nesterov's accelerated gradient method and we analytically show that the convergence rate of the expected dual value is $\mc O(1/k^2)$. We also show the performance of the algorithm in an intra-day optimal power flow simulation. As ongoing work, we are performing an analysis on the convergence of the primal variables. Moreover, we investigate methods to relax the assumptions considered to generalize the approach.

%\iffalse 
\appendix

\subsection{Proof of Lemma \ref{le:main_ineq}}
\label{sec:app1}

To show Lemma \ref{le:main_ineq}, we can follow the approach used on the proof of \cite[Lemma 2.3]{beck2009}. Therefore, first we need the following intermediate result. 

\begin{lemma}%[\textcolor{red}{lower bound of difference of dual function}]
	\label{le:lb_dif_df}
Let $\psi(\mu,\xi)$ be a quadratic approximation model of $q(\mu)$, i.e.,
\begin{equation}
\label{eq:psi}
	\psi(\mu,\xi) = q(\xi) + \langle \mu-\xi,\nabla q(\xi) \rangle - \sum_{i \in \mc N} \frac{1}{2\eta_i}\|\mu_i-\xi_i\|^2,
\end{equation}
and $\lambda(\xi)$ be defined by
%\begin{equation*}
$	\lambda(\xi) = \arg\max_{\mu} \psi(\mu,\xi).$ 
%\end{equation*}
%which admits an analytical solution, i.e., $\lambda_i(\xi) = \xi_i + \eta_i \nabla_{\xi_i} q(\xi)$, for all $i\in \mc N$. 
	Furthermore, let Assumptions \ref{as:cost_f}-\ref{as:feas_set} hold and $\eta_i \in (0, 1/L_i]$, where $L_i$ is defined by \eqref{eq:Li}. Then, for any $\mu
 \in \bb R^{\sum_{i \in \mc N}m_i}$, %it holds that
	\begin{equation}
	\begin{aligned}
		q(\lambda(\xi))- q(\mu) \geq& \sum_{i \in \mc N}\frac{1}{\eta_i}\langle \xi_i-\mu_i,  \lambda_i(\xi) -\xi_i \rangle\\
		&+ \sum_{i \in \mc N}\frac{1}{2\eta_i}\|\lambda_i(\xi)- \xi_i \|^2.
		\label{eq:lb_dif_df}
	\end{aligned}
	\end{equation}
%	\textcolor{red}{CHECK THE INEQUALITY. MIGHT BE INCORRECT}
\end{lemma}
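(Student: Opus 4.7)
The plan is to mimic the classical accelerated-gradient argument by combining three ingredients: (i) the Lipschitz-type lower bound of Lemma \ref{le:desc} applied between $\xi$ and the maximizer $\lambda(\xi)$; (ii) the concavity of the dual function $q$, which gives the linear upper bound $q(\mu) \leq q(\xi) + \langle \nabla q(\xi),\mu-\xi\rangle$ valid for every $\mu$; and (iii) the first-order optimality condition for $\lambda(\xi)=\arg\max_\mu \psi(\mu,\xi)$. Since the quadratic penalty in $\psi$ is block-separable across the agent index $i$ and the maximization is unconstrained, this optimality condition reads componentwise as $[\nabla q(\xi)]_i = \frac{1}{\eta_i}\bigl(\lambda_i(\xi)-\xi_i\bigr)$, so that $\nabla q(\xi)$ can ultimately be eliminated in favor of the increments $\lambda_i(\xi)-\xi_i$.

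First I would invoke Lemma \ref{le:desc} at $(\lambda,\mu)=(\lambda(\xi),\xi)$. The hypothesis $\eta_i \in (0,1/L_i]$ gives $L_i \leq 1/\eta_i$, so the quadratic term in Lemma \ref{le:desc} is dominated by the corresponding one in $\psi$, yielding $q(\lambda(\xi)) \geq \psi(\lambda(\xi),\xi)$. Adding the concavity bound $-q(\mu) \geq -q(\xi) - \langle \nabla q(\xi), \mu - \xi\rangle$ cancels the two $q(\xi)$ terms and collapses the inner products into a single $\langle \lambda(\xi)-\mu,\nabla q(\xi)\rangle$, producing
\[ q(\lambda(\xi)) - q(\mu) \geq \langle \lambda(\xi)-\mu,\, \nabla q(\xi)\rangle - \sum_{i\in\mc N}\frac{1}{2\eta_i}\|\lambda_i(\xi)-\xi_i\|^2. \]
Next I would substitute the componentwise optimality condition into the inner product and decompose $\lambda_i(\xi)-\mu_i = \bigl(\lambda_i(\xi)-\xi_i\bigr) + \bigl(\xi_i-\mu_i\bigr)$. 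This produces a full $\sum_i \frac{1}{\eta_i}\|\lambda_i(\xi)-\xi_i\|^2$ together with the desired cross term $\sum_i \frac{1}{\eta_i}\langle \xi_i-\mu_i, \lambda_i(\xi)-\xi_i\rangle$; absorbing the $-\sum_i \frac{1}{2\eta_i}\|\lambda_i(\xi)-\xi_i\|^2$ penalty leaves precisely the factor of one half that appears in \eqref{eq:lb_dif_df}.

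The main (mild) obstacle is purely one of bookkeeping: verifying that the descent-type inequality of Lemma \ref{le:desc}, which is stated with the true smoothness constants $L_i$, may be uniformly relaxed to the step sizes $\eta_i$ (this is exactly what $\eta_i\leq 1/L_i$ buys), and confirming that the unconstrained optimality condition for $\lambda(\xi)$ separates block-by-block so that $\nabla q(\xi)$ can be fully substituted out of the bound. Both points are immediate from the block-diagonal form of the penalty in $\psi$ and from the definition of $L_i$ in \eqref{eq:Li}; no further analytical ingredient is required beyond the concavity and Lipschitz smoothness of $q$ already established.
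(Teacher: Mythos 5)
Your proposal is correct and follows essentially the same route as the paper's proof: use $\eta_i\le 1/L_i$ with Lemma \ref{le:desc} to get $q(\lambda(\xi))\ge\psi(\lambda(\xi),\xi)$, combine with the concavity bound $q(\mu)\le q(\xi)+\langle\mu-\xi,\nabla q(\xi)\rangle$, and then eliminate $\nabla q(\xi)$ via the blockwise optimality condition $\lambda_i(\xi)=\xi_i+\eta_i[\nabla q(\xi)]_i$. You simply spell out the final substitution that the paper compresses into "combining the two preceding relations with the definition of $\psi$ and $\lambda(\xi)$."
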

\begin{proof}
	Since $\eta_i \in (0, 1/L_i]$, it follows from Lemma \ref{le:desc} that $q(\lambda(\xi)) \geq \psi(\lambda(\xi),\xi)$. Thus, $$q(\lambda(\xi)) - q(\mu) \geq \psi(\lambda(\xi),\xi)-q(\mu).$$ Since $q(\cdot)$ is concave, we also have that 
	$$q(\mu) \leq q(\xi) + \langle \mu - \xi, \nabla q(\xi)\rangle.$$
	The desired inequality \eqref{eq:lb_dif_df} is obtained by combining the two preceding relations with the definition of $\psi(\lambda(\xi),\lambda)$ in \eqref{eq:psi} and $\lambda(\xi)$.
\end{proof}
\begin{remark}
	The update $\lambda(k)$ in \eqref{eq:lambda_std} follows $\lambda(k) = \arg \max_{\mu} \psi(\mu,\hat{\lambda}(k))$, which admits a unique solution. \eod 
\end{remark}
Next, \cite[Lemma 4]{goldstein2014} shows that $\omega_i(k+1) = \omega_i(k) + \theta(k+1)\left(\lambda_i(k+1)-\hat{\lambda}_i(k+1) \right)$. Based on this relation, we obtain that
\begin{align*}
	&\|\omega_i(k+1)\|^2 - \|\omega_i(k)\|^2 \\
	&= \|\omega_i(k) + \theta(k+1)(\lambda_i(k+1)-\hat{\lambda}_i(k+1))\|^2 - \|\omega_i(k)\|^2\\
	%&= 2\theta(k+1)\langle\bm{\lambda}_i^j(k+1)-\bm{\xi}_i^j(k+1),\bm{\omega}_i^j(k)\rangle \\
	%&\quad + (\theta(k+1))^2\|\bm{\lambda}_i^j(k+1)-\bm{\xi}_i^j(k+1)\|^2\\
	%&= 2\theta(k+1) \left(\bm{\lambda}_i^j(k+1)-\bm{\xi}_i^j(k+1)\right)^{\top}\cdot\\
	%&\quad \cdot\left(\theta(k)\bm{\lambda}_i^j(k) - (\theta(k)-1)\bm{\lambda}_i^j(k-1)\right)+\\
	%&\quad -2\theta(k+1)\langle\bm{\lambda}_i^j(k+1)-\bm{\xi}_i^j(k+1),\bm{\lambda}_i^{j\star}\rangle+\\
	%&\quad + (\theta(k+1))^2\|\bm{\lambda}_i^j(k+1)-\bm{\xi}_i^j(k+1)\|^2\\
	%&=2\theta(k+1)\left(\bm{\lambda}_i^j(k+1)-\bm{\xi}_i^j(k+1)\right)^{\top}\cdot\\
	%&\quad \cdot \left(\theta(k+1)\bm{\xi}_i^j(k+1)-(\theta(k+1)-1)\bm{\lambda}_i^j(k)\right)+\\
	%&\quad -2\theta(k+1)\langle\bm{\lambda}_i^j(k+1)-\bm{\xi}_i^j(k+1),\bm{\lambda}_i^{j\star}\rangle+\\
	%&\quad + (\theta(k+1))^2\|\bm{\lambda}_i^j(k+1)-\bm{\xi}_i^j(k+1)\|^2\\
	&=2\theta(k+1)(\theta(k+1)-1)\cdot \\ &\quad \cdot \langle{\lambda}_i(k+1)-{\hat{\lambda}}_i(k+1),{\hat{\lambda}}_i(k+1)-{\lambda}_i(k)\rangle+\\
	&\quad + (\theta(k+1)^2-\theta(k+1))\|{\lambda}_i(k+1)-{\hat{\lambda}}_i(k+1)\|^2+\\
	&\quad + \theta(k+1)\|{\lambda}_i(k+1)-{\hat{\lambda}}_i(k+1)\|^2+\\ 
	&\quad +2\theta(k+1)\langle{\lambda}_i(k+1)-{\hat{\lambda}}_i(k+1),{\hat{\lambda}}_i(k+1)-{\lambda}_i^{\star}\rangle,
\end{align*}
where the second equality is obtained by performing some algebraic manipulations using  \eqref{eq:omega} and \eqref{eq:lambda_h_std}. Multiplying by $\frac{1}{2\eta_i}$ and summing over $i\in\mc N$ the above equality, we obtain that
\begin{align*}
	&\sum_{i \in \mc N} \frac{1}{2\eta_i} \left(\|\omega_i(k+1)\|^2 - \|\omega_i(k)\|^2 \right)\\
	& = (\theta(k+1)^2-\theta(k+1))\cdot\\
	&\quad \sum_{i \in \mc N} \left(\frac{1}{\eta_i} \langle{\lambda}_i(k+1)-{\hat{\lambda}}_i(k+1),{\hat{\lambda}}_i(k+1)-{\lambda}_i(k)\rangle\right.\\
	& \qquad\left. + \frac{1}{2\eta_i}\|{\lambda}_i(k+1)-{\hat{\lambda}}_i(k+1)\|^2 \right)\\
	& \quad + \theta(k+1)\sum_{i \in \mc N} \left(\frac{1}{2\eta_i}\|{\lambda}_i(k+1)-{\hat{\lambda}}_i(k+1)\|^2 \right.\\
	& \qquad \left. + \frac{1}{\eta_i}\langle{\lambda}_i(k+1)-{\hat{\lambda}}_i(k+1),{\hat{\lambda}}_i(k+1)-{\lambda}_i^{\star}\rangle \right).
\end{align*}
By applying the inequality \eqref{eq:lb_dif_df} twice to substitute each term inside the two summations, we obtain the desired inequality, as follows:
\begin{align*}
	&\sum_{i \in \mc N} \frac{1}{2\eta_i} \left(\|\omega_i(k+1)\|^2 - \|\omega_i(k)\|^2 \right)\\
	&\leq (\theta(k+1)^2-\theta(k+1))(q(\lambda(k+1)) - q(\lambda(k))\\
	&\quad + \theta(k+1)(q(\lambda(k+1))-q(\lambda^{\star}))\\
	& = \theta(k+1)^2q(\lambda(k+1)) - (\theta(k+1)^2-\theta(k+1))q(\lambda(k))\\
	& \quad - \theta(k+1)q(\lambda^{\star})\\
	& = \theta(k+1)^2q(\lambda(k+1)) - \theta(k)^2q(\lambda(k))\\
	& \quad + (\theta(k)^2-\theta(k+1)^2)q(\lambda^{\star})\\
	& = \theta(k)^2(q(\lambda^{\star})-q(\lambda(k)))\\
	&\quad - \theta(k+1)^2(q(\lambda^{\star})-q(\lambda(k+1))),
\end{align*}
where the second equality is obtained based on step 4 of Algorithm \ref{alg:std}, where $\theta(k+1)^2-\theta(k+1)-\theta(k)^2 = 0$. \eod
%\balance
%\fi 

\bibliographystyle{ieeetran}
\bibliography{ref}

\end{document}